\newtheorem{theorem}{Theorem}[section]
\newtheorem{lemma}[theorem]{Lemma}
\newtheorem{claim}{Claim}
\theoremstyle{definition}
\theoremstyle{remark}
\newtheorem{remark}[theorem]{Remark}
\begin{document}

\title[Comparing $h$-genera, Bridge-1 genera and Heegaard genera of knots]
{Comparing $h$-genera, bridge-1 genera and Heegaard genera of knots}

\author{Ruifeng Qiu}
\address{School of Mathematical Sciences, Key Laboratory of MEA(Ministry of Education) \& Shanghai Key Laboratory of PMMP, East China Normal University, Shanghai 200241, China}
\email{rfqiu@math.ecnu.edu.cn}

\author{Chao Wang}
\address{School of Mathematical Sciences, Key Laboratory of MEA(Ministry of Education) \& Shanghai Key Laboratory of PMMP, East China Normal University, Shanghai 200241, China}
\email{chao\_{}wang\_{}1987@126.com}

\author{Yanqing Zou}
\address{School of Mathematical Sciences, Key Laboratory of MEA(Ministry of Education) \& Shanghai Key Laboratory of PMMP, East China Normal University, Shanghai 200241, China}
\email{yqzou@math.ecnu.edu.cn}

\subjclass[2010]{Primary 57M25; Secondary 57N10}

\keywords{$h$-genus, Heegaard genus, bridge $b$ genus, tunnel number}

\thanks{The authors are supported by National Natural Science Foundation of China (NSFC), grant Nos. 12131009, 12371067, 12471065 and Science and Technology Commission of Shanghai Municipality (STCSM), grant No. 22DZ2229014.}

\begin{abstract}
Let $h(K)$, $g_H(K)$, $g_1(K)$, $t(K)$ be the $h$-genus, Heegaard genus, bridge-1 genus, tunnel number of a knot $K$ in the $3$-sphere $S^3$, respectively. It is known that $g_H(K)-1=t(K)\leq g_1(K)\leq h(K)\leq g_H(K)$. A natural question arises: when do these invariants become equal?

We provide the necessary and sufficient conditions for equality and use these to show that for each integer $n\geq 1$, the following three families of knots are infinite:
\begin{center}
 $A_{n}=\{K\mid t(K)=n<g_1(K)\}$, \\
 $B_{n}=\{K\mid g_1(K)=n<h(K)\}$, \\
$C_{n}=\{K\mid h(K)=n<g_H(K)\}$. \\
\end{center}
This result resolves a conjecture in \cite{Mo2}, confirming that each of these families is infinite.

\end{abstract}

\date{}
\maketitle


\section{Introduction}\label{sec:Intro}
It is well known that for any integer $g\geq 0$, $S^3$ contains a closed, orientable and separating genus $g$ surface $\Sigma_{g}$ so that 
 $\overline{S^3-\Sigma_{g}}$ is a union of two handlebodies $H$ and $H'$. Denote $H\cup_{\Sigma_{g}} H'$  a  Heegaard splitting of $S^3$. One great result by 
 Waldhausen \cite{wal} shows that for any integer $g\geq 0$, there is only one genus $g$ Heegaard splitting of $S^3$ up to isotopy. Hence, according to the position of a knot $K$ in $S^3$ with respect to $\Sigma_g$, one can define various knot invariants.

Classical invariants of this kind include the {\it bridge number} $b$ and the {\it Heegaard genus} $g_H$. For the bridge number, one requires that $K$ meets $\Sigma_0$ transversely and meets each side of $\Sigma_0$ in a trivial collection of $n$ arcs. Then, $b(K)$ is the minimal possible $n$. For the Heegaard genus, one requires that $K$ lies in one side of $\Sigma_g$ and is a core of the handlebody, or equivalently, $\Sigma_g$ also gives a Heegaard splitting of the complement of $K$. Then, $g_H(K)$ is the minimal possible $g$. We note that $g_H$ is related to the {\it tunnel number} $t$ of knots by the equation $g_H(K)=t(K)+1$.

The bridge number has a natural generalization for $\Sigma_g$ with a given $g\geq 0$. One similarly requires that $K$ intersects $\Sigma_g$ transversely and meets each side of $\Sigma_g$ in a collection of $m$ arcs that is parallel to $\Sigma_g$. Then, the {\it genus $g$ bridge number} $b_g(K)$ is the minimal possible $m$, and clearly $b_0=b$. For some interesting properties and results about the general $b_g$, one can see \cite{Do,TT,Zu}. The above $K$ is said to be in $m$ {\it bridge position} with respect to $\Sigma_g$. Since for a given number $m=n>0$ such an $n$ bridge position always exists for some $g\geq 0$, one can dually define the {\it bridge $n$ genus} $g_n(K)$ to be the minimal possible $g$. Then, note that $b_0(K)\geq b_1(K)\geq\cdots$ and $g_1(K)\geq g_2(K)\geq\cdots$ converge to $1$ and $0$, respectively. And $b_g(K)>n$ if and only if $g_n(K)>g$. The two sequences $\{b_g(K)\}_{g\geq 0}$ and $\{g_n(K)\}_{n\geq 1}$ are related by the equations $b_g(K)=|\{n\mid g_n(K)>g\}|+1$ and $g_n(K)=|\{g\mid b_g(K)>n\}|$.

In this paper, we explore the relationship between these genera and the $h$-genus of knots, as defined in \cite{Mo1}. Here, one requires that $K$ lies in $\Sigma_g$. Then, $h(K)$ is defined to be the minimal possible $g$. In \cite{Mo2,Mo3}, it was shown that the inequalities $t(K)\leq g_1(K)\leq h(K)\leq g_H(K)$ hold for any $K$. Additionally, it is not hard to see that $g_n(K)\leq g_{n+1}(K)+1$. Therefore, we obtain the sequence of inequalities:
\[\cdots\leq g_2(K)\leq g_1(K)\leq h(K)\leq g_H(K)\leq g_1(K)+1\leq g_2(K)+2\leq\cdots\quad (\ast)\]
where $g_H(K)\leq g_1(K)+1$ is equivalent to $t(K)\leq g_1(K)$.

For any Heegaard splitting $H\cup_{\Sigma_{g}} H'$ of $S^3$,  a pair of essential disks $(D\subset H, D'\subset H')$ is {\it complementary} if $\mid \partial D\cap \partial D'\mid =1$, and exactly  one of $D$ and $D'$ meets $K$.
After carefully examing those combinatorial structures of knots, we obtain the following necessary and sufficient conditions for equality among the genera.
\begin{theorem}
\label{theorem1}
  (I)  Given $i\geq 0$ and $l>0$, $g_i(K)=g_{i+l}(K)+l$ holds if and only if there exists a $\Sigma_g$ realizing $g_i(K)$ with $l$ complementary pairs $(D_j,D_j')$, $1\leq j\leq l$, where the $l$ bouquets of circles $\partial D_1\cup\partial D_1',\ldots,\partial D_l\cup\partial D_l'$ are pairwise disjoint. \\
  (II) The equality $g_H(K)=h(K)+1$ holds if and only if there exists a $\Sigma_g$ realizing $g_H(K)$ with a complementary pair $(D,D')$ and a spanning annulus $A$, so that exactly one of $D$ and $D'$ meets $A$.
\end{theorem}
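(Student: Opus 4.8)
The plan is to isolate a single local move relating bridge position to Heegaard genus and to run it $l$ times. Call it the \emph{tubing move}: given $K$ in $n$-bridge position on a Heegaard surface $\Sigma$ of $S^3$ and a bridge arc $\alpha$ of $K$ in, say, $H'$, together with a bridge disk $\Delta$ co-bounded by $\alpha$ and an arc on $\Sigma$, one tubes $\Sigma$ along $\partial N(\alpha)$ to obtain a new Heegaard surface $\Sigma^{+}$ of genus one larger. I expect to show that on $\Sigma^{+}$ the knot $K$ is in $(n-1)$-bridge position and that the newly created handle supplies a pair of essential disks $(D\subset H,D'\subset H')$ with $\abs{\partial D\cap\partial D'}=1$ in which exactly one disk (the meridian of the tube, met once by $K$) intersects $K$; that is, a complementary pair. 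The inverse move is a destabilization: any complementary pair $(D,D')$ exhibits $\Sigma$ as stabilized (by definition $\abs{\partial D\cap\partial D'}=1$, and by Waldhausen's uniqueness the handle is standard), and destabilizing along it lowers the genus by one while, because exactly one of $D,D'$ meets $K$, the single strand of $K$ threading the collapsed handle becomes a new bridge, raising the bridge number by one. Thus tubing and destabilization realize the transitions $(\text{genus }g,\,n\text{ bridges})\leftrightarrow(g-1,\,n+1)$, each witnessed exactly by a complementary pair.

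Granting this, I would prove (I) as follows. For the ``if'' direction, take a $\Sigma_g$ realizing $g_i(K)$ carrying $l$ complementary pairs with pairwise disjoint bouquets $\partial D_j\cup\partial D_j'$. Disjointness of the bouquets lets one choose the $l$ destabilizing regions disjoint, so all $l$ destabilizations may be performed independently; the outcome is an $(i+l)$-bridge surface of genus $g-l=g_i(K)-l$. Hence $g_{i+l}(K)\le g_i(K)-l$, and combined with the always-valid $g_i(K)\le g_{i+l}(K)+l$ from $(\ast)$ we get equality. For the ``only if'' direction, start from a $\Sigma_{g'}$ realizing $g_{i+l}(K)$, so $g'=g_{i+l}(K)=g_i(K)-l$; choose $l$ of its $i+l$ bridge arcs pairwise disjoint, together with disjoint bridge disks, and apply the tubing move to each. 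This produces an $i$-bridge surface of genus $g'+l=g_i(K)$, hence one realizing $g_i(K)$, equipped with $l$ complementary pairs whose bouquets are disjoint by construction.

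For (II) I would run the same machine with the spanning annulus $A$ playing the role that $K$ played in (I); this substitution is forced because in a $\Sigma_g$ realizing $g_H(K)$ the knot is a core of a handlebody rather than a union of bridges, so the data recording ``descent to a lower surface'' is the annulus $A$ joining the core $K$ to $\Sigma$. For the ``if'' direction, destabilize the given $\Sigma_g$ along its complementary pair; since exactly one of $D,D'$ meets $A$, the annulus survives and guides an isotopy pushing $K$ onto the resulting genus $g-1$ surface, so $K\subset\Sigma_{g-1}$ and $h(K)\le g_H(K)-1$, forcing $g_H(K)=h(K)+1$ via $h(K)\le g_H(K)\le h(K)+1$ from $(\ast)$. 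For the ``only if'' direction, take a minimal surface $\Sigma_h$ with $K\subset\Sigma_h$ and tube $\Sigma_h$ along a pushoff of $K$ to build a genus $h+1$ Heegaard surface of $S^3\setminus K$ in which $K$ is a core; since $g_H(K)=h+1$ this surface realizes $g_H(K)$, the new handle gives the complementary pair, and the product annulus between the core and the copy of $K$ on $\Sigma_h$ is a spanning annulus met by exactly one disk.

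The main obstacle I anticipate is pinning the local model down precisely enough to justify the bookkeeping ``exactly one disk meets $K$ (resp.\ $A$)'' $\iff$ ``the bridge number rises by exactly one (resp.\ $K$ descends to the lower surface).'' In particular one must verify that a complementary pair on a \emph{genus-minimal} bridge surface cannot be disjoint from $K$, since such a pair would destabilize to a lower-genus $i$-bridge surface contradicting minimality, and cannot meet $K$ in both disks in a way obstructing the clean transition; and for (II) one must position the spanning annulus so that it meets the destabilizing handle in a controlled way, being neither destroyed nor forced to raise the genus again. Verifying the simultaneity of the $l$ destabilizations from disjointness of the bouquets, and conversely the disjointness of the created pairs from disjointness of the chosen bridge arcs, should then reduce to a careful but routine cut-and-paste argument.
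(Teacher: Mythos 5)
Your overall strategy---tubing along bridge arcs for the ``only if'' directions and destabilizing along complementary pairs for the ``if'' directions---is exactly the paper's (its Lemmas 2.1 and 2.3), and your tubing constructions, including the production of the complementary pairs and, in (II), of the spanning annulus, are sound. The gap is in the ``if'' directions, which you defer as ``careful but routine cut-and-paste.'' That deferred step is where essentially all of the paper's work lies, and part of it is genuinely not routine. In part (I), collapsing the handle along $(D_1,D_1')$ does make the strand through $D_1$ a bridge, but you must also show that every \emph{other} strand of $K$ is still simultaneously parallel to the new surface: their bridge disks (and, when $i=0$, the annulus exhibiting the core $K$ as boundary-parallel) can intersect $D_1$ and $D_1'$, and the paper spends most of its Lemma 2.1 normalizing these intersections (to a single arc with $D_1$, to nothing with $D_1'$) before it can conclude $(i+1)$-bridge position. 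Your ``simultaneous independent destabilizations'' also silently requires making the disks of different pairs disjoint in their interiors, not just their boundary bouquets.

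The more serious issue is in (II). There the spanning annulus $A$ must be normalized so that $A\cap D$ is the single arc $\eta$ through $K\cap D$, while \emph{preserving} $A\cap D'=\emptyset$: that disjointness is precisely what is needed at the end, so that the two arcs of $K$ (one pushed onto the lower surface across $D'$, the other across $A$) land disjointly. Innermost-circle and outermost-arc surgeries alone do not suffice: if an outermost arc $\omega$ of $A\cap D$ separates $\eta$ from the point $\partial D\cap\partial D'$, then the outermost disk it cuts off contains that point, and surgering $A$ along it drags $\partial A$ across $\partial D'$, creating intersections of $A$ with $D'$ and destroying the hypothesis you must keep. The paper needs a special device here: it takes the frontier annulus $A'$ of a neighborhood of $\partial D'$ in $H$ (so that $A'\cap D$ is an arc parallel to $\omega$) and performs the exchange surgery on $A\cup A'$ along the strip of $D$ between $\omega$ and $A'\cap D$, which removes $\omega$ while keeping the new spanning annulus disjoint from $D'$. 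Without this, or an equivalent move, your proof of the ``if'' direction of (II) does not close. Finally, your worry that a complementary pair on a minimal surface might be disjoint from $K$ is moot: by definition a complementary pair has exactly one of its two disks meeting $K$.
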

In \cite{Mo2}, Morimoto considered the following three families of knots: 
\begin{align*}
A_n&=\{K\mid h(K)=n<g_H(K)\},\\
B_n&=\{K\mid g_1(K)=n<h(K)\},\\
C_n&=\{K\mid t(K)=n<g_1(K)\}.
\end{align*}
It is not hard to see that $A_{n}$, $B_{n}$ and $C_{n}$ form a partition of all nontrivial knots. In \cite{Mo2}, it was conjectured that $A_{n}$, $B_{n}$ and $C_{n}$ are all nonempty. 

Using Heegaard distance, mapping class group theory, and combinatorial techniques,  we obtain the following general result, which answers the above conjecture.

\begin{theorem}\label{thm:main}
For each integer $n>0$, the families $A_n$, $B_n$, $C_n$ are all infinite.
\end{theorem}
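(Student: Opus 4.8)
The plan is to use Theorem~\ref{theorem1} to convert membership in each family into the presence or absence of the disk/annulus configurations it describes, and then to realize or obstruct those configurations by controlling the Hempel distance of a minimal-genus Heegaard splitting of the exterior $E(K)$. First I would extract the purely combinatorial content of the chain $(\ast)$: since consecutive terms differ by at most $1$, a nontrivial $K$ lies in $C_n$ exactly when $t(K)=n$ and $g_1(K)=h(K)=g_H(K)=n+1$; in $A_n$ exactly when $t(K)=g_1(K)=h(K)=n$ and $g_H(K)=n+1$; and in $B_n$ exactly when $t(K)=g_1(K)=n$ and $h(K)=g_H(K)=n+1$. Thus membership is determined by where the single unit ``jump'' in $(\ast)$ occurs. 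Reading this through Theorem~\ref{theorem1}, with $i=0$, $l=1$ and the convention $g_0=g_H$, I obtain: $K\in C_n$ iff $t(K)=n$ and \emph{no} $\Sigma_g$ realizing $g_H(K)$ carries a complementary pair; $K\in A_n$ iff $h(K)=n$ and \emph{some} such $\Sigma_g$ carries the complementary-pair-plus-spanning-annulus configuration of part (II); and $K\in B_n$ iff $g_1(K)=n$, some $\Sigma_g$ carries a complementary pair, but none carries such a pair together with a compatible spanning annulus.

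The key analytic input is distance rigidity. If the genus $g=n+1$ Heegaard splitting of $E(K)$ has Hempel distance $>2g$, then by the Scharlemann--Tomova bound it is the unique minimal splitting up to isotopy, so every existential quantifier ``there is a $\Sigma_g$ realizing the genus'' collapses to a check on that one surface; moreover a complementary pair forces its two boundary curves, which lie in the disk sets of the two sides and meet exactly once, to have curve-complex distance at most $2$, so distance $\ge 3$ already forbids any complementary pair. This settles $C_n$ at once: any tunnel number $n$ knot whose genus $n+1$ splitting has sufficiently large distance lies in $C_n$. Since such knots exist with distance tending to infinity (for instance by amalgamating standard pieces along high powers of a pseudo-Anosov gluing while keeping the tunnel number fixed at $n$), and since the distance of the unique minimal splitting is an isotopy invariant that grows without bound along the family, the knots are pairwise inequivalent and $C_n$ is infinite.

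For $A_n$ and $B_n$ I would instead build the knots as controlled sums: a local block supplying the required complementary pair (and, for $A_n$, the spanning annulus, as furnished by a knot embedded on a Heegaard surface, e.g.\ a torus knot for $n=1$ and its higher-genus analogues in general) amalgamated with a high-distance block that pins down the total genus, forces the minimal splitting to be essentially unique, and excludes the unwanted configuration. Using additivity/subadditivity of tunnel number and genus together with distance-under-amalgamation estimates, I would compute $t,g_1,h,g_H$ exactly and verify the Theorem~\ref{theorem1} criteria: for $A_n$ the annulus persists, so $g_H=h+1$; for $B_n$ the high-distance block destroys every spanning annulus compatible with the complementary pair, forcing $h=g_H$ while retaining $t=g_1$. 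Varying the high-distance parameter then yields infinitely many distinct knots in each of $A_n$ and $B_n$.

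I expect the main obstacle to be the non-existence directions, namely ruling out a forbidden configuration on \emph{every} minimal-genus Heegaard surface rather than on a single convenient one; this is precisely where the uniqueness consequence of distance rigidity is indispensable, and the delicate point is guaranteeing that the sum constructions for $A_n$ and $B_n$ introduce no unexpected low-distance or low-genus splittings and that the spanning annulus is present exactly when it is wanted. A closely related technical hurdle is translating the definitions of $g_1$ and $h$, which refer to Heegaard surfaces of $S^3$ with $K$ in bridge or embedded position, into statements about splittings and bridge surfaces of $E(K)$ to which the curve-complex distance machinery genuinely applies.
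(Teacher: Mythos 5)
Your reduction of membership to ``where the unit jump in $(\ast)$ occurs'' is correct, and your treatment of $C_n$ is essentially sound and close to the paper's Section 5: both arguments take tunnel number $n$ knots admitting splittings of distance $>2t(K)+4$ (which exist by \cite{JMM}) and invoke Scharlemann--Tomova; the paper concludes that the genus $g_1(K)+1$ splitting coming from the $(g_1,1)$-bridge position must be (boundary) stabilized, while you instead obstruct complementary pairs on the unique minimal surface --- two dual formulations of the same fact via Remark~\ref{rem:g0Eq}. One imprecision even here: for the splitting of $E(K)$, the two curves of a complementary pair do \emph{not} both lie in the disk sets of the two sides. The disk that meets $K$ is punctured by $K$, so in $E(K)$ it becomes a \emph{spanning annulus} in the compression body, not a disk. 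The conclusion $d(S)\leq 2$ survives (band the honest disk to itself along the single intersection point, and use an essential disk disjoint from the spanning annulus), but it requires this argument, not the one you gave.

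The genuine gap is in $A_n$ and $B_n$, where your strategy --- amalgamate with a ``high-distance block'' so that Scharlemann--Tomova forces uniqueness of the minimal splitting and kills the unwanted configurations --- is self-defeating. Every knot in $A_n\cup B_n$ satisfies $t(K)=g_1(K)$, so by Remark~\ref{rem:g0Eq} some surface realizing $g_H(K)$ carries a complementary pair, and by the computation above the corresponding splitting of $E(K)$ has Hempel distance at most $2$. Thus high distance is \emph{incompatible} with membership in $A_n$ or $B_n$: the hypothesis $d>2g$ of Scharlemann--Tomova can never hold for these knots, and the uniqueness mechanism you lean on is exactly the tool that is unavailable. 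This is why the paper does something different in each case. For $A_n$ it uses cable knots over a genus-$n$ knot $K'$: the inequality $h(K)\leq n$ is automatic because the cable lies on $\partial N(K')$, which isotopes into a minimal Heegaard surface of $E(K')$, and the hard input $g_H(K)=n+1$ comes from the Wang--Zou theorem \cite{WZ} that cabling increases tunnel number by one --- no distance argument at all. For $B_n$ it constructs knots whose minimal splittings have distance exactly two but are ``locally large'': conjugated twists $T_\alpha^{N}\circ T_\beta\circ T_\gamma^{-1}\circ T_\alpha^{-N}$ make the subsurface projection distance to $S_D$ huge, and the role of uniqueness is played not by Scharlemann--Tomova but by Johnson--Minsky--Moriah's theorem \cite{JMM} (a low-genus Heegaard surface must contain $S_D$) combined with Masur--Schleimer's holes lemma \cite{MS}; this is what controls \emph{all} minimal-genus surfaces and excludes the spanning annulus. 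Finally, your appeal to ``additivity/subadditivity of tunnel number'' under the proposed sums is not a citable fact --- tunnel number is famously neither additive nor subadditive in general --- so even the exact computations of $t,g_1,h,g_H$ in your $A_n$ and $B_n$ constructions are unsupported.
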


\begin{remark}
During the proof of    Theorem \ref{thm:main},  we find some interesting 
phenomenons as follows.
\begin{itemize}
     \item[$\bullet$] All torus knots belong to $A_{1}$, while $A_{n}$ contains most of all tunnel number $n$ cable knots.
    \item[$\bullet$] $B_{n}$ contains tunnel number $n$ knots of which complements admit locally large,  but distance two Heegaard splittings.
    \item[$\bullet$]  $C_{n}$ contains all tunnel number $n$,  high distance hyperbolic knots.
\end{itemize}
\end{remark}

The proof of Theorem 1 is divided into Lemmas 2.1 and 2.3, and the proof of Theorem 2 is presented in Sections 3, 4, and 5.

\section{The proof of Theorem~\ref{theorem1}}\label{sec:P}
Let $K$ be a knot in $S^3$, and let $\Sigma_g$ be a Heegaard surface of $S^3$, where $K$ meets $\Sigma_g$ transversely. Let $H$ and $H'$ be the two handlebodies bounded by $\Sigma_g$ in its two sides, respectively. Consider two proper disks $D\subset H$ and $D'\subset H'$ which meet $K$ transversely in at most one point. We call the disk pair $(D,D')$ {\it complementary} if  $\partial D$ and $\partial D'$ in $\Sigma_g$ meet transversely in exactly one point, and exactly one of $D$ and $D'$ meets $K$. We will regard $g_H$ as $g_0$ for convenience.

\begin{lemma}\label{lem:gbEq}
Given $i\geq 0$ and $l>0$, $g_i(K)=g_{i+l}(K)+l$ holds if and only if there exists a $\Sigma_g$ realizing $g_i(K)$ with $l$ complementary pairs $(D_j,D_j')$, $1\leq j\leq l$, where the $l$ bouquets of circles $\partial D_1\cup\partial D_1',\ldots,\partial D_l\cup\partial D_l'$ are pairwise disjoint.
\end{lemma}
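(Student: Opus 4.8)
The plan is to set up a correspondence between complementary pairs on a Heegaard surface and an elementary move that trades one unit of genus for one unit of bridge number, and then to combine this correspondence with the inequalities $(\ast)$. Concretely, I want to show that destabilizing along a single complementary pair $(D,D')$ (with, say, $D'$ meeting $K$ once and $D$ disjoint from $K$) converts an $i$-bridge position on $\Sigma_g$ into an $(i+1)$-bridge position on a surface of genus $g-1$, and conversely that tubing a genus-$g$ surface along one bridge arc produces a complementary pair while converting an $i$-bridge position into an $(i-1)$-bridge position on genus $g+1$. The condition ``exactly one of $D$ and $D'$ meets $K$'' is precisely what guarantees that $K$ runs over the resulting handle exactly once, so that the move shifts the bridge number by one rather than leaving it unchanged.

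For the \emph{if} direction, suppose $\Sigma_g$ realizes $g_i(K)$ and carries $l$ complementary pairs with pairwise disjoint bouquets $\partial D_j\cup\partial D_j'$. Because the bouquets are disjoint, each pair lies in its own ball neighborhood, so the $l$ destabilizations may be performed simultaneously and independently. Destabilizing all $l$ pairs yields $K$ in $(i+l)$-bridge position with respect to a surface of genus $g-l=g_i(K)-l$, whence $g_{i+l}(K)\le g_i(K)-l$. On the other hand, iterating $g_n(K)\le g_{n+1}(K)+1$ from $(\ast)$ gives $g_i(K)\le g_{i+l}(K)+l$. The two inequalities force $g_i(K)=g_{i+l}(K)+l$.

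For the \emph{only if} direction, I start from a surface $\Sigma'$ of genus $g_{i+l}(K)$ realizing the $(i+l)$-bridge position, so that each handlebody meets $K$ in $i+l$ arcs. Here I will use that a bridge position supplies these arcs with a system of \emph{disjoint} parallelism disks $\Delta_1,\dots,\Delta_{i+l}$, where $\partial\Delta_j=\alpha_j\cup\beta_j$ and the $\beta_j\subset\Sigma'$ are pairwise disjoint. Tubing $\Sigma'$ along any $l$ of the arcs $\alpha_j$, using disjoint neighborhoods of the corresponding $\Delta_j$, increases the genus to $g_{i+l}(K)+l=g_i(K)$, leaves $K$ in $i$-bridge position, and produces $l$ complementary pairs: the cocore $D_j'$ of the $j$-th tube meets $K$ once, while the disk $D_j=\Delta_j\setminus N(\alpha_j)$ is disjoint from $K$ and meets $\partial D_j'$ in one point. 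Since the $\Delta_j$ are disjoint, the bouquets $\partial D_j\cup\partial D_j'$ are pairwise disjoint, and the resulting surface realizes $g_i(K)$, as required.

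The main obstacle, and where the care must go, is verifying that each elementary move genuinely preserves a \emph{bridge} position, i.e. that after destabilizing (resp. tubing) the affected arc is again boundary-parallel while the remaining arcs are unchanged. This rests on the fact that $|\partial D\cap\partial D'|=1$ forces $(D,D')$ to be a genuine stabilizing (cancelling) pair, so a neighborhood of $\partial D\cup\partial D'$ is standard, together with the fact that $K$ crosses the handle exactly once; these two facts let one read off the trivial arc created by the destabilization. A secondary point to treat separately is the boundary case $i=0$, where $g_0=g_H$ and ``$0$-bridge position'' means that $K$ is a core of a handlebody with $K\cap\Sigma_g=\emptyset$; here the move converts the core into a single trivial arc and the same bookkeeping applies.
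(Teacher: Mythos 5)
Your overall strategy coincides with the paper's: tube the low-genus surface along bridge arcs to produce the complementary pairs (only-if), and destabilize along the pairs to trade genus for bridge number (if). The "only if" half is essentially complete and matches the paper. But there is a genuine gap in the "if" half, and it sits exactly where you flag "the main obstacle": you assert that after destabilizing along $(D,D')$ the affected arc is again boundary-parallel "while the remaining arcs are unchanged," and you justify this purely locally, from the standardness of a neighborhood of the cancelling pair together with the fact that $K$ crosses the handle once. That local picture only produces the parallelism disk for the new arc $\alpha=K\cap N(D)$. Being in bridge position, however, is not a property of arcs one at a time: it requires a system of pairwise disjoint parallelism disks, with boundary on the \emph{new} surface, for all arcs on both sides simultaneously, and rebuilding that system is a global problem. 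The parallelism disks of the other arcs of $K\cap H$ may intersect $D$ in circles and arcs; the parallelism disks of the arcs of $K\cap H'$ may have boundary running through the annulus $N(D)\cap\Sigma_g$ that is deleted from the surface, so they must be isotoped across $D'$ before their boundaries lie in the new surface; and when $i=0$ there are no arcs on the $H$-side at all, so one needs a spanning annulus between $K$ and $\Sigma_g$ and a separate argument that $K\setminus\alpha$ can be pushed into the new surface. The paper's proof devotes most of its length to exactly these verifications, via innermost-circle and outermost-arc surgery arguments; your sketch contains none of them.

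A second, related gap: your claim that "each pair lies in its own ball neighborhood, so the $l$ destabilizations may be performed simultaneously and independently" uses only the disjointness of the bouquets, which is a statement about boundaries on $\Sigma_g$. The disks themselves are global objects in the handlebodies and may intersect one another in their interiors, so the disjoint balls you need do not exist a priori; one must first make the $D_j$'s (and the $D_j'$'s) pairwise disjoint, which the paper does by an innermost-circle surgery argument, using that the intersection circles miss $K$ so that surgery does not change which disks meet $K$. Neither of these two verifications is routine enough to leave implicit — they are the actual content of this direction — so as written the proof is incomplete.
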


\begin{proof}
Let $\Sigma_{g'}$ be a Heegaard surface realizing $g_{i+l}(K)$. Let $H_{g'}$ be a handlebody bounded by $\Sigma_{g'}$. Then $K\cap H_{g'}$ is a collection of $i+l$ arcs $\alpha_j$, $1\leq j\leq i+l$, which is parallel to $\Sigma_{g'}$. So, there are disjoint disks $D_j\subset H_{g'}$, $1\leq j\leq i+l$, so that each $\beta_j=D_j\cap\Sigma_{g'}$ is an arc and $\partial D_j=\alpha_j\cup\beta_j$. Let $N(\alpha_j)$ be a sufficiently small open regular neighborhood of $\alpha_j$ in $H_{g'}$ for $1\leq j\leq l$. Then, by removing all the $N(\alpha_j)$ from $H_{g'}$, we get a handlebody, whose boundary is a Heegaard surface with genus $g_{i+l}(K)+l$. We denote this surface by $\Sigma_g$.

If $i>0$, then those $i+l$ arcs in the other side of $\Sigma_{g'}$ become $i$ arcs lying in one side of $\Sigma_g$. Since $\alpha_j$ is a core of $N(\alpha_j)$, we can check that $K$ is in $i$ bridge position with respect to $\Sigma_g$. When $i=0$, we see that $K$ lies in one side of $\Sigma_{g}$ and it meets a proper disk in the handlebody transversely in exactly one point. Note that $K$ is parallel to $\Sigma_g$. So it is a core of the handlebody.

Hence, $g_i(K)\leq g_{i+l}(K)+l$ always holds. If the equality holds, then $\Sigma_g$ realizes $g_i(K)$. Note that the disk in the closure of $N(\alpha_j)$ dual to $\alpha_j$ together with the $D_j$ with the part in $N(\alpha_j)$ removed gives a complementary pair for $1\leq j\leq l$. Clearly the boundaries from different pairs are disjoint. So we have the ``only if'' part.

Conversely, given the $\Sigma_g$ and the complementary pairs, we can assume that the disks from different pairs are disjoint. Otherwise we can assume that $D_j$ meets $D_k$ transversely for some $j\neq k$, where the intersection circles do not meet $K$. We can then pick an innermost circle $\gamma$ in $D_j$, which bounds a disk $D\subset D_j$. Now, since $\gamma$ also bounds a disk $D'\subset D_k$, where $D'$ meets $K$ if and only if $D$ meets $K$, one can modify $D_k$ by a surgery along $D$. So, we have new complementary pairs satisfying the condition, and $\gamma$ is removed. By this method, we can remove all those possible intersections between $D_1,\ldots,D_l$ and between $D_1',\ldots,D_l'$.

Now assume that $D_1$ meets $K$ and $D_1'$ does not meet $K$. Let $N(D_1)$ be a small open regular neighborhood of $D_1$ in $H$. Then, by removing $N(D_1)$ from $H$, we get a handlebody, whose boundary is a Heegaard surface with genus $g_i(K)-1$. Let $\alpha$ be the subarc of $K$ lying in $N(D_1)$. Then, $\alpha$ can be moved into the new Heegaard surface via $D_1'$. Denote this new surface by $\Sigma$. We need to show that $K$ is in $i+1$ bridge position with respect to $\Sigma$.

If $i=0$, then $K$ is a core of $H$. There exist a simple closed curve $\beta$ in $\Sigma_g$ and a spanning annulus $A$ in $H$ so that $\partial A=K\cup\beta$ and $A$ intersects $D_1$ transversely in some circles and arcs. Note that there is exactly one arc containing $K\cap D_1$ and it meets $\beta$. So the arc is a spanning arc in $A$. Then we see that any innermost circle in $D_1$ bounds a disk in $A$, and so can be removed by a surgery. If there is no such circle, then any outermost arc in $D_1$ can be removed by a surgery, since it cuts off a disk in $A$. Hence we can modify $A$ so that it meets $D_1$ in exactly one arc. Then, we see that the arc $K\setminus\alpha$ can also be moved into $\Sigma$.

If $i>0$, then $K\cap H$ consists of $i$ arcs, denoted by $\alpha_{l+1},\ldots,\alpha_{l+i}$. As before, we have disjoint disks $D_j\subset H$, $l<j\leq l+i$, where $\partial D_j=\alpha_j\cup\beta_j$ and $\beta_j=D_j\cap\Sigma_g$. We can require that the union of those disks meets $D_1$ transversely in some circles and arcs. Then, similar to the case of $i=0$, we can modify the disks so that there is exactly one arc in the intersection. The arc divides some $D_j$ into two disks, and we see that $(K\cap H)\setminus\alpha$ is parallel to $\Sigma$. We also have $i$ arcs in $K\cap H'$, and there are disjoint disks $D_j'\subset H'$, $l<j\leq l+i$, as before. Now, one can modify the disks so that they do not meet $D_1'$ and each $\partial D_j'$ meets $\partial D_1$ transversely. Then, we can move those disks across $D_1'$ so that their boundaries lie in $\Sigma$. We see that the disk corresponding to $\alpha$ can be chosen to avoid the $D_j'$, $l<j\leq l+i$.

So, in each case, $K$ is in $i+1$ bridge position with respect to $\Sigma$. Then, consider the pairs $(D_2,D_2'),\ldots,(D_l,D_l')$. By induction, one can get a Heegaard surface $\Sigma_{g'}$ with genus $g_i(K)-l$, and $K$ is in $i+l$ bridge position with respect to $\Sigma_{g'}$. Hence, $g_{i+l}(K)\leq g_i(K)-l$. Then $g_i(K)=g_{i+l}(K)+l$, and we get the ``if'' part.
\end{proof}

\begin{remark}\label{rem:g0Eq}
Let $i=0$, $l=1$. We see that $g_H(K)=g_1(K)+1$ holds if and only if there exists a $\Sigma_g$ realizing $g_H(K)$ with a complementary pair $(D,D')$.
\end{remark}

\begin{lemma}\label{lem:h0Eq}
The equality $g_H(K)=h(K)+1$ holds if and only if there exists a $\Sigma_g$ realizing $g_H(K)$ with a complementary pair $(D,D')$ and a spanning annulus $A$, so that exactly one of $D$ and $D'$ meets $A$.
\end{lemma}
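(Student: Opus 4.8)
The plan is to read the equality $g_H(K)=h(K)+1$ through the chain $(\ast)$. Since $g_1(K)\le h(K)\le g_H(K)\le g_1(K)+1$, the genus $h(K)$ can only be $g_H(K)-1$ or $g_H(K)$, and $g_H(K)=h(K)+1$ holds exactly when $h(K)=g_1(K)=g_H(K)-1$. The condition $g_1(K)=g_H(K)-1$ is, by Remark~\ref{rem:g0Eq}, equivalent to the existence of a $\Sigma_g$ realizing $g_H(K)$ carrying a complementary pair $(D,D')$. Thus the whole content of the lemma beyond Remark~\ref{rem:g0Eq} is to show that, in the presence of such a pair, the extra datum of a compatible spanning annulus is what upgrades $g_1(K)=g_H(K)-1$ to $h(K)=g_H(K)-1$. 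I would therefore prove the two implications separately, in each case producing or exploiting the spanning annulus.

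For the ``only if'' direction I would build the required configuration by hand. Assume $g_H(K)=h(K)+1$, so $h(K)=g_1(K)=g_H(K)-1$. Fix a genus $h(K)$ Heegaard surface $\Sigma_h$ realizing $h(K)$, so that $K\subset\Sigma_h$, and let $H_0$ be one of the two handlebodies it bounds. Attach a $1$-handle (tube) to $H_0$ along an annular neighborhood of $K$ in $\Sigma_h$, chosen so that $K$ becomes a core of the enlarged handlebody $H$. The resulting Heegaard surface $\Sigma_g$ has genus $g_H(K)$ and has $K$ as a core, hence realizes $g_H(K)$; the tube supplies a complementary pair $(D,D')$ with $D$ the meridian of the tube (meeting $K$ once) and $D'$ the dual disk in the opposite handlebody. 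Now take $\beta$ to be the original position of $K$ inside $\Sigma_h\subset\Sigma_g$, which lies off the tube, and let $A$ be the annulus swept out as the core $K$ is pushed back along the tube to $\beta$. Then $\partial A=K\cup\beta$ is a spanning annulus, $A$ meets $D$ in a single arc, and since $\beta$ is disjoint from the tube while $\partial D'$ sits on the tube, $A$ is disjoint from $D'$. Hence exactly one of $D,D'$ meets $A$, as required.

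For the ``if'' direction I would run the genus reduction of Lemma~\ref{lem:gbEq} (the case $i=0$, $l=1$) and transport $A$ through it. Given the $\Sigma_g$ with its pair $(D,D')$, Remark~\ref{rem:g0Eq} gives $g_1(K)=g_H(K)-1$, so $h(K)\ge g_H(K)-1$ and it suffices to prove $h(K)\le g_H(K)-1$. Since $K$ is a core of the handlebody $H$ containing it, $D'$ lies in the opposite handlebody and $D$ meets $K$ at the single point $K\cap D\in\partial A$; consequently $D$ automatically meets $A$, and the hypothesis forces $A\cap D'=\emptyset$. I would first normalize $A$ by the innermost-circle and outermost-arc surgeries used in Lemma~\ref{lem:gbEq} so that $A\cap D$ is a single arc, while keeping $A\cap D'=\emptyset$. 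Performing the reduction (remove a neighborhood of $D$ from $H$ and push the subarc $K\cap N(D)$ across $D'$) then produces a genus $g_H(K)-1$ Heegaard surface $\Sigma$ with $K$ in $1$-bridge position; because $A$ avoids $D'$ the $D'$-push does not cut $A$, and because $A$ meets the removed handle only along the single arc $A\cap D$ the annulus $A$ descends to a spanning annulus $\tilde A$ in $\Sigma$ with $\partial\tilde A=K\cup\tilde\beta$ and $\tilde\beta\subset\Sigma$. Isotoping $K$ along $\tilde A$ places $K$ inside $\Sigma$, so $h(K)\le g_H(K)-1$, whence $h(K)=g_H(K)-1$ and $g_H(K)=h(K)+1$.

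The main obstacle is exactly this last transport step in the ``if'' direction: verifying that $A$ survives the handle removal as an honest spanning annulus rather than being severed into a disk. Had $A$ met $D'$ (equivalently, had $\beta$ been forced to cross $\partial D'$), the push across $D'$ would cut $A$ and leave only a bridge disk, yielding $1$-bridge position but not $K\subset\Sigma$ --- this is precisely the case $h(K)=g_H(K)$. The role of the hypothesis ``exactly one of $D,D'$ meets $A$'' is to exclude this, and the delicate points are to check, after the surgery normalization, that $A\cap D$ can be reduced to a single arc without reintroducing intersections with $D'$, and that the descended curve $\tilde\beta$ is an honest essential curve on $\Sigma$ so that $\tilde A$ is a genuine annulus exhibiting $K$ as isotopic into $\Sigma$.
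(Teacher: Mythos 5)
Your ``only if'' construction breaks down at the choice of $\beta$. Once $K$ is rerouted through the attached tube so that it becomes a core of the enlarged handlebody $H$, the core meets the meridian disk $D$ of the tube algebraically once, whereas your curve $\beta$ (a copy of $K$ in $\Sigma_h$ ``lying off the tube'') can be pushed into the interior of $H$ missing $D$ entirely, so it has algebraic intersection number $0$ with $D$. Hence $[K]\neq[\beta]$ in $H_1(H)$, and no embedded annulus in $H$ can have boundary $K\cup\beta$: the spanning annulus you describe simply does not exist. Concretely, the isotopy you invoke (``pushed back along the tube to $\beta$'') sweeps across the parallelism region between the tube core and $\Sigma_h$, which is exactly where $D'$ lives; its trace leaves $H$ and crosses $D'$, so it is not a spanning annulus in the handlebody containing $K$. (A second inaccuracy: $\partial D'$ does not ``sit on the tube''---the dual disk supplied by the tubing construction is the parallelism disk, whose boundary contains the long arc of the original $K$ on $\Sigma_h$.) The paper's construction avoids this by putting the spanning annulus on the \emph{opposite} side of $K$ from the parallelism disk: $A$ is the radial annulus from the core $K$ out to the tube wall, so its surface boundary curve runs \emph{over} the tube (crossing $\partial D'$ zero times and dual to $D$ once); then $A$ meets $D$ in a single arc and is disjoint from $D'$, as required. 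With your $\beta$ the desired configuration is impossible; with this corrected $\beta$ it is immediate.

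In the ``if'' direction you follow the paper's strategy (normalize $A\cap D$, remove $N(D)$, push $\alpha=K\cap N(D)$ across $D'$ and $K\setminus\alpha$ along $A$), but the step you yourself flag as ``delicate'' is precisely the nontrivial content, and it is missing. The innermost-circle and outermost-arc surgeries borrowed from Lemma~\ref{lem:gbEq} do \emph{not} automatically preserve $A\cap D'=\emptyset$: if an outermost arc $\omega\subset D$ cuts off a disk containing the point $\partial D\cap\partial D'$, then surgering $A$ along that disk drags the surface boundary of $A$ across $\partial D'$, creating intersections of the new boundary curve with $\partial D'$ and destroying exactly the disjointness you need at the end (so that the two arcs pushed into $\Sigma$ do not cross). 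The paper resolves this case with an extra device: it takes the annulus $A'$ cutting off a small neighborhood of $\partial D'$ in $H$ (disjoint from $A$ because $\partial A\cap\partial D'=\emptyset$), observes that $A'\cap D$ is an arc parallel to $\omega$, and performs surgery on $A\cup A'$ along the disk of $D$ lying between $\omega$ and $A'\cap D$; this removes $\omega$ while keeping the resulting spanning annulus disjoint from $D'$. Without this (or some substitute) argument your normalization claim, and with it the whole ``if'' direction, remains unproved.
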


\begin{proof}
Let $\Sigma_{g'}$ be a Heegaard surface realizing $h(K)$, and let $\Sigma$ be a parallel copy of $\Sigma_{g'}$. So, between the two Heegaard surfaces we have a product $\Sigma\times[0,1]$, where we identify $\Sigma$ and $\Sigma_{g'}$ with $\Sigma\times\{0\}$ and $\Sigma\times\{1\}$, respectively. Let $H_{g'}$ denote the handlebody bounded by $\Sigma$ that contains $\Sigma_{g'}$. Let $\gamma\subset\Sigma$ be the copy of $K$, and let $\delta\subset\gamma$ be an arc. Now modify $H_{g'}$ by removing a small open regular neighborhood of the disk $\delta\times[0,1]$ from it. Then, $\partial H_{g'}$ becomes a Heegaard surface so that $K$ is in $1$ bridge position with respect to it.

Let $\alpha$ be the arc $K\cap H_{g'}$. Let $N(\alpha)$ be a small open regular neighborhood of $\alpha$ in $H_{g'}$. As in the proof of Lemma~\ref{lem:gbEq}, by removing $N(\alpha)$ from $H_{g'}$, we can obtain a Heegaard surface $\Sigma_g$ with genus $h(K)+1$. Moreover, $K$ becomes a core lying in one side of $\Sigma_g$. Hence $g_H(K)\leq h(K)+1$ always holds. If the equality holds, then $\Sigma_g$ realizes $g_H(K)$. Note that $(\gamma\times[0,1])\cap H_{g'}$ gives a disk $D$, and the disk in the closure of $N(\alpha)$ dual to $\alpha$ together with $D\setminus N(\alpha)$ gives a complementary pair. In the handlebody bounded by $\Sigma_g$ that contains $K$, we also have a spanning annulus $A$ with $A\cap(\Sigma\times[0,1])=K$. Since $A\cap D\subset N(\alpha)$, we have the ``only if'' part.

Conversely, given the $\Sigma_g$ with the disks $(D,D')$ and annulus $A$, one can assume that $K$ is a core of $H$, $A$ intersects $D$ transversely in some circles and arcs, and $A$ does not meet $D'$. Then, as in the proof of Lemma~\ref{lem:gbEq}, one can remove the circles in $D$ by surgeries. Let $\eta$ be the arc in $D$ containing $K\cap D$. If an outermost arc in $D$ does not separate $\eta$ and the point $D'\cap D$, then it can be removed by a surgery as before. So we assume that there is no such arc. Let $\omega\subset D$ be an outermost arc separating $\eta$ and $D'\cap D$. Consider a proper annulus $A'$ in $H$ that cuts off a small regular neighborhood of $\partial D'$ in $H$. Then $A'\cap A=\emptyset$ and $A'\cap D$ is an arc parallel to $\omega$ in $D$. Let $D_0\subset D$ be the disk lying between $\omega$ and $A'\cap D$. Then, we can do a surgery to $A\cup A'$ along $D_0$. It turns $A$ into another spanning annulus satisfying the condition, and $\omega$ is removed. By the method, one can remove those arcs other than $\eta$. Hence $A$ can be modified so that $A\cap D=\eta$.

 Now, let $N(D)$ be a small open regular neighborhood of $D$ in $H$. As before, by removing $N(D)$ from $H$, we get a handlebody. Its boundary is a Heegaard surface with genus $g_H(K)-1$. We denote it by $\Sigma$. Let $\alpha$ be the arc $K\cap N(D)$. It can be moved into $\Sigma$ via $D'$. Since $A\cap D=\eta$, $K\setminus\alpha$ can also be moved into $\Sigma$. In $\Sigma$, the two arcs do not intersect, because $A\cap D'=\emptyset$. So $K$ can be moved into $\Sigma$. Hence, $h(K)\leq g_H(K)-1$. Then $g_H(K)=h(K)+1$, and we get the ``if'' part.
\end{proof}

\begin{remark}
The above method to simplify $A\cap D$ can also be used to replace the conditions in Lemma~\ref{lem:gbEq} by some weaker ones. For example, the conditions about those $l$ bouquets of circles can be replaced by $\partial D_i\cap\partial D_j'=\emptyset$, $\partial D_i'\cap\partial D_j'=\emptyset$, and if both $D_i$ and $D_j$ meet $K$, then $\partial D_i\cap\partial D_j=\emptyset$, where $1\leq i,j\leq l$ and $i\neq j$.
\end{remark}

\begin{figure}[h]
\includegraphics{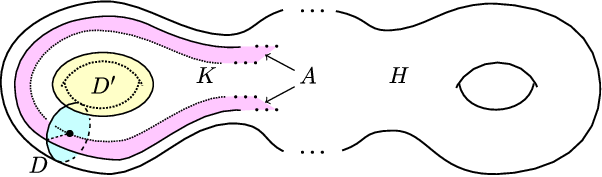}
\caption{The disk pair $(D,D')$ and the annulus $A$.}\label{fig:Lem2}
\end{figure}

\section{$A_{n}$ is infinite. }
When $n=1$, all torus knots  are contained in $A_{1}$. So $A_{1}$ is infinite. Therefore we assume that $n\geq 2$. It is well known that there is a nontrivial knot $K'$ with $n$ as its minimal Heegaard genus, i.e., $g_{H}(K')=n$, see \cite{JMM} for detail.  Let $K$ be a nontrivial knot lying in the torus boundary of  the closed neighborhood of $K'$, i.e.,  $K$ is  a cable knot with $K'$ as its companion. On one hand, by Theorem 1.2 in \cite{WZ}, except finitely many cases,  $t(K)=t(K')+1$. Hence $g_{H}(K)=g_{H}(K')+1=n+1$ and $h(K)\geq n$.  On the other hand, since 
$E(K')$, the complement of $K'$ in $S^3$, admits a genus $g_{H}(K')$ Heegaard splitting $V\cup_{S} W$. Without loss of generality, we assume that the torus boundary $\partial E(K')$ lies in the compression body $V$. By definition,  $V$ is the disk sum of a handlebody and $\partial E(K')\times I$. So  $K\subset \partial E(K')$ can be isotoped into  $\partial_{+}V=S$.  Hence $h(K)\leq n$. 

In all, $h(K)=n$ while $g_{H}(K)=n+1$. So $K\in A_{n}$. The abundance of choices of $K$ implies that $A_{n}$ is infinite. 

\section{$B_{n}$ is infinite.}
 For any positive integer $n$, let $H\cup_{S} H'$ be a genus $n+1$ Heegaard splitting of $S^3$. 
We choose a pair of complementary disks $D\subset H$ and $D'\subset H'$, i.e., $\mid \partial D\cap \partial D'\mid=1$. Let $K_{0}$ be the unknot isotopic to $\partial D'$ in $H$.  Denote $V$ the compression body $H-N(K_{0})$. In the following argument, we use the similar idea in building locally complicated Heegaard splittings in \cite{QZ,DZ}.

In the proof of Theorem 3.1 in \cite{MMS},  there is an essential simple closed curve 
$\alpha\subset S_{D}=\overline{S-\partial D}$, which bounds an essential disk $D_{\alpha}\subset H$ so that (1) $K_{0}$ passes through $D_{\alpha}$  in two times; (2) $\alpha$ intersects all disk boundaries  of ${V}$; (3) $\alpha$ intersects all disk boundaries of $H'$.

It is not hard to see that there is a pair of essential simple closed curves $\beta$ and $\gamma$ so that 
(1) both $\beta$ and $\gamma$ bound disks in $H$; (2) $\beta\cup \gamma$ fills $S_{D}$.
Let $\Psi_{N}=T_{\alpha}^N \circ T_{\beta}\circ T_{\gamma}^{-1} \circ T_{\alpha}^{-N}$, where $T_{\alpha}$ is a Dehn twist along $\alpha$. Then $\Psi_{N}$ maps $H$ back to $H$, and its restriction on $\partial H$ is reducible. Since $\beta$ and $\gamma$ is a pair of filling curves in $S_{D}$, $T_{\beta}\circ T_{\gamma}^{-1}$ is a pseudo-anosov map on $S_{D}$. Then there is a pair of stable and unstable measured laminations $\mu^{+}$ and $\mu^{-}$ of $T_{\beta}\circ T_{\gamma}^{-1}$ in the projective measured lamination space $\mathcal{PML}(S_{D})$. So $T_{\alpha}^N(\mu^+), T_{\alpha}^{N}(\mu^-)$ are stable and unstable laminations of $\Psi_{N}$.  Moreover, when $N$ goes to infinity, $T_{\alpha}^{N}(\mu^{+})$( resp. $T_{\alpha}^{N}(\mu^{-})$) converges to $\alpha$ in $\mathcal {PML}(S_{D})$.

By the choice of $\alpha$, there exists an $N>0$ so that
$T_{\alpha}^{N}(\mu^-)$ intersects all disk boundaries of $V$ in $S_{D}$.  Similarly, $T_{\alpha}^{N}(\mu^+)$ intersects all disk boundaries  of $H'$ in $S_{D}$. Hence
$T_{\alpha}^{N}(\mu^-)\notin \overline{\mathcal {D}_{S_{D}}(V)}$ ( resp. $T_{\alpha}^{N}(\mu^+)\notin \overline{\mathcal {D}_{S_{D}}(H')}$), the closure of all disk boundary curves in $\mathcal{PML}(S_{D})$.
 We use $\mathcal {C}(S)$ to represent the curve complex of $S$ and $\mathcal {AC}(S_{D})$ the arc and curve complex of $S_{D}$. For any essential simple closed curve $c\subset S$, $\pi_{S_{D}}(c)$ represents the subsurface projection of $c$ in $S_{D}$, see \cite{MM}.
Denote $\mathcal {D}_{S_{D}}(H')$ the disk complex of $H'$ spanned by all disk boundaries in $S_{D}$. Similar for $\mathcal {D}_{S_{D}}(V)$.
  By  a  similar argument in the proof of Theorem 2.7 in \cite{Hem}, there exists an $n'$ so that the distance 
 \begin{equation}
 \label{equation 2.1}
     d_{\mathcal {AC}(S_{D})}(\mathcal {D}_{S_{D}}(H'), \Psi_{N}^{n'}(\mathcal {D}_{S_{D}}(V)))\geq 2n+14, \text{where} ~\Psi_{N}^{n'} \text{ is the $n'$ power of $\Psi_{N}$}.
 \end{equation}
Denote $K_{n}$  the image $\Psi_{N}^{n'}(K_{0})$.  Next we will prove that 
$g_{H}(K_{n})=n+1$. 

\begin{theorem}
\label{Claim2.1}
(1) $g_{H}(K_{n})=n+1$ and $g_{1}(K_{n})=n$;\\
(2) There are at most two genera $n+1$ nonisotopic Heegaard splittings of $E(K_{n})$ up to isotopy.
\end{theorem}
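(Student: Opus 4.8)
The plan is to analyze $K_n = \Psi_N^{n'}(K_0)$ by tracking how the large subsurface distance in \eqref{equation 2.1} forces the Heegaard structure of $E(K_n)$. First I would establish the upper bound $g_H(K_n)\le n+1$ directly: since $\Psi_N^{n'}$ is a homeomorphism of $S^3$ taking $H\cup_S H'$ to itself, it carries the genus $n+1$ Heegaard splitting of the unknot complement $E(K_0)$ (recall $K_0$ is the unknot isotopic to $\partial D'$) to a genus $n+1$ splitting of $E(K_n)$. Indeed $V = H - N(K_0)$ is a compression body, so $V\cup_{S_D} H'$ is a Heegaard splitting of $E(K_0)$, and applying $\Psi_N^{n'}$ gives the splitting $\Psi_N^{n'}(V)\cup_{S_D}H'$ of $E(K_n)$ (using that $\Psi_N$ fixes $H'$ up to the reducibility of its restriction, or more carefully that $\Psi_N$ extends over $H$). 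This yields $g_H(K_n)\le n+1$.

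The heart of the matter is the lower bound $g_H(K_n)\ge n+1$ and the distance-two statement, which I would extract from \eqref{equation 2.1} via the Hempel-style distance machinery. The key point is that the subsurface projection distance $d_{\mathcal{AC}(S_D)}\big(\mathcal{D}_{S_D}(H'),\Psi_N^{n'}(\mathcal{D}_{S_D}(V))\big)\ge 2n+14$ bounds from below the Hempel distance of the splitting $\Psi_N^{n'}(V)\cup_{S_D}H'$ of $E(K_n)$. I would invoke the bounded-geodesic-image theorem of Masur–Minsky together with the standard inequality relating curve-complex distance in $S$ to subsurface projections, to conclude that the genus $n+1$ splitting of $E(K_n)$ has distance at least, say, $2n+4$ (after subtracting the universal additive error). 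By the Scharlemann–Tomova comparison theorem, any two distinct Heegaard surfaces of a $3$-manifold satisfy $d(\Sigma)\le 2\,\mathrm{genus}(\Sigma')$; since a high-distance splitting cannot coexist with an unrelated low-genus one, this forces $g_H(K_n)=n+1$, and simultaneously pins down that there are at most two genus $n+1$ Heegaard splittings up to isotopy — the bound of two coming from the two possible sides (which handlebody contains the knot), as in Scharlemann–Tomova's uniqueness results for sufficiently high distance.

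For the bridge-one genus $g_1(K_n)=n$, I would argue in two directions. For $g_1(K_n)\le n$: the curve $\alpha$ bounds the disk $D_\alpha\subset H$ through which $K_0$ passes twice, so after applying $\Psi_N^{n'}$ the knot $K_n$ meets the image disk in two points, exhibiting $K_n$ in a $1$-bridge position with respect to a genus $n$ surface (reducing the genus by one at the cost of one bridge, exactly the mechanism of Lemma~\ref{lem:gbEq} run backwards). For $g_1(K_n)\ge n$, i.e.\ $g_1(K_n)>n-1$: I would use $(\ast)$, which gives $t(K_n)\le g_1(K_n)$ and $g_H(K_n)\le g_1(K_n)+1$. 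From $g_H(K_n)=n+1$ we get $g_1(K_n)\ge n$, and then $g_1(K_n)\le n$ yields equality. This also shows $t(K_n)=g_H(K_n)-1=n$, so that $g_1(K_n)=n<h(K_n)$ would place $K_n$ in $B_n$ once we verify $h(K_n)>n$; the latter should follow because the high distance prevents the knot from lying on a genus $n$ surface, again by a subsurface-projection obstruction applied through Lemma~\ref{lem:h0Eq} and Remark~\ref{rem:g0Eq}.

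The main obstacle I anticipate is making the distance estimate rigorous: \eqref{equation 2.1} is a statement about subsurface projections into $S_D$, whereas the Heegaard distance lives in $\mathcal{C}(S)$, so I must carefully convert the large projection distance into a genuine lower bound on $d_{\mathcal{C}(S)}$ for the splitting of $E(K_n)$, controlling the additive and multiplicative constants from the bounded geodesic image theorem and verifying that the relevant disk sets are exactly $\mathcal{D}_{S_D}(V)$ and $\mathcal{D}_{S_D}(H')$ after removing the knot. A secondary subtlety is ensuring the \emph{two} Heegaard splittings in part (2) are genuinely nonisotopic and that no third splitting can exist; this requires invoking the full strength of the Scharlemann–Tomova uniqueness theorem and checking that the distance $2n+14$ comfortably exceeds the threshold $2(n+1)$ needed to rule out unrelated splittings, which is precisely why the constant in \eqref{equation 2.1} was chosen to grow linearly in $n$.
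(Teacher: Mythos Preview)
Your approach has a genuine gap at its core. You want to derive a lower bound on the Hempel distance of the splitting $\Psi_N^{n'}(V)\cup_S H'$ from the large subsurface projection distance in $S_D$, and then invoke Scharlemann--Tomova. But this splitting is, by design, a \emph{distance-two} splitting that is only ``locally large'' (this is exactly the phenomenon from \cite{QZ,DZ} that the construction imitates, and it is flagged in the paper's own summary of $B_n$). The complementary pair $(D,D')$ survives the twisting because $\Psi_N$ is supported in $S_D$ and fixes $\partial D$; this pair keeps the global Hempel distance small no matter how large the $S_D$-projection distance is. So the conversion you anticipate as ``the main obstacle'' is not merely delicate --- it is impossible here, and Scharlemann--Tomova gives nothing.

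The paper instead invokes the Johnson--Minsky--Moriah theorem \cite[Theorem~1.1]{JMM}, which is precisely tailored to this situation: if $d_{S_D}(S)>2g(\Sigma)+2$ for an essential subsurface $S_D\subset S$, then every Heegaard surface $\Sigma$ of $E(K_n)$ can be isotoped so that $S\cap\Sigma\supset S_D$. (One first uses \cite[Lemma~11.7]{MS} to pass from \eqref{equation 2.1} to $d_{\mathcal{AC}(S_D)}\big(\pi_{S_D}\mathcal D(H'),\pi_{S_D}\mathcal D(\Psi_N^{n'}(V))\big)\ge 2n+6$.) From $S_D\subset\Sigma$ the Euler characteristic forces $g(\Sigma)\ge n+1$, giving $g_H(K_n)=n+1$. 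Part~(2) then falls out of the same containment: $\Sigma\setminus S_D$ is an annulus $A$, one argues $A$ must lie in $\Psi_N^{n'}(V)$, and there are only two possibilities for such an annulus (boundary-parallel or cutting off a $T^2\times I$), yielding at most two splittings. Your Scharlemann--Tomova route cannot reach this classification either.

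A secondary error: your argument for $g_1(K_n)\le n$ via the disk $D_\alpha$ meeting $K$ twice does not exhibit a $1$-bridge position; a disk hit twice by the knot is not the mechanism of Lemma~\ref{lem:gbEq}. The correct argument is Remark~\ref{rem:g0Eq}: the surviving complementary pair $(D,D')$, with $D$ meeting $K_n$ once, witnesses $g_H(K_n)=g_1(K_n)+1$, hence $g_1(K_n)=n$.
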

\begin{proof} It is not hard to see that the boundary curve of each essential disk of $H'$ ( resp. $\Psi_{N}^{n'}(V)$) intersects 
$S_{D}$ essentially, i.e., $S_{D}$ is a compressible hole for  both $\mathcal {D}(H')$ and $\mathcal {D}(\Psi_{N}^{n'}(V))$. 
\begin{lemma} [Lemma 11.7 \cite{MS}]
\label{lemme2.1}
   Assume that $S_{D}$ is a compressible hole for both $H'$ and $\Psi_{N}^{n'}(V)$. For any essential disk $D'\subset H'$( resp. $D_{V}\subset \Psi_{N}^{n'}(V)$,  there is an essential disk $D''$  (resp. $D_{V}'$) with its boundary curve in $S_{D}$ so that 
\begin{eqnarray}
    diam_{\mathcal {AC}(S_{D})}(\pi_{S_{D}}(\partial D'), \partial D'')\leq 4;\\
    diam_{\mathcal {AC}(S_{D})}(\pi_{S_{D}}(\partial D_{V}), \partial D_{V}')\leq 4.
\end{eqnarray} 
\end{lemma} 
Therefore, by inequalities (1),(2) and (3),
\[ d_{\mathcal {AC}(S_{D})}(\pi_{S_D}\mathcal  {D}(H'), \pi_{S_{D}}\mathcal {D}(\Psi_{N}^{n'}(V)))\geq 2n+6.\]

Let $\Sigma$ be a minimal genus Heegaard surface of $E(K_{n})$. Then $g(\Sigma)\leq g(S)=n+1$. Since $S_{D}$ is an essential subsurface of $S$,  Theorem 1.1 in \cite{JMM} says that " Assume that $S$ and $\Sigma$ are two Heegaard surfaces of $E(K_{n})$ and $S_{D}\subset S$ is an essential subsurface. If  $d_{S_D}(S) > 2g(\Sigma)+2$,  then up to ambient isotopy,  $S\cap  \Sigma$ contains $S_{D}$."  In their setting,  $d_{S_{D}}(S)=d_{\mathcal {AC}(S_{D})}(\pi_{S_{D}}\mathcal {D}(H'), \pi_{S_{D}}\mathcal {D}(\Psi_{N}^{n'}(V)))\geq 2n+6> 2g(\Sigma)+2$. So  $S_{D}\subset \Sigma$.  
Remember $S_{D}$ is the closure of $S-\partial D$. Then $\chi(S_{D})=\chi(S)=2-2(n+1)$. Hence $\chi(\Sigma)\leq \chi(S_{D})\leq 2-2(n+1)$ and so $g(\Sigma)\geq n+1$. It means that $g(\Sigma)=n+1$. By Remark 2.2,  $g_{1}(K_{n})=n$.

By the above argument, we can isotope $\Sigma$ so that $\Sigma\cap S$ contains $S_{D}$ and $A=\Sigma-S_{D}$ is an annulus.
We claim that the interior of $A$ is fully contained in $\Psi_{N}^{n'}(V)$. For if not, then $A\cap H'$ contains an essential annulus $A'$ in $H'$. But, since  $\partial A'$ is isotopic to $\partial D$, $\partial A'$ intersects $\partial D'$ in one point. So $A'$ is boundary parallel to $H'$. A contradiction.  

So either $\Sigma$ is isotopic to $S$. Or, $A$ cuts out $T^2\times I$ from $\Psi_{N}^{n'}(V)$.  Hence for the Heegaard surface $\Sigma$, it cuts $E(K_{n})$ into one handlebody
$\overline{\Psi_{N}^{n'}(V)-T^{2}\times I}$ and $H'\cup T^{2}\times I$.  Here, $H'\cup T^{2}\times I$ is a  compression body for $\partial D$ is primitive in $H'$. 
\end{proof}

In the left part, we will prove that $h(K_{n})=g_{H}(K_{n})$. By Lemma \ref{lem:h0Eq}, it suffices to prove that in $\Psi_{N}^{n'}(V)\cup_{S} H'$,  there is no spanning annulus  $A\subset \Psi_{N}^{n'}(V)$ disjoint from  any essential disk $D'\subset H'$, i.e., $\partial A\cap \partial D'\neq \emptyset$. 

Suppose that there is a spanning annulus $A$ and an essential disk $D'\subset H'$ so that $K_{n}\subset \partial A$ and $\partial A\cap \partial D'=\emptyset$.  On one hand, $d_{\mathcal {AC}(S_{D})}(\pi_{S_{D}}(\partial A), \pi_{S_{D}}(\partial D'))\leq 2$. Since $S_{D}$ is a compressible hole for $\mathcal {D}(H')$,  by Lemma \ref{lemme2.1}, there is an  essential disk $D_{H'}\subset H'$ with its boundary in $S_{D}$ so that  
$diam_{\mathcal {A}{C}(S_{D})}(\pi_{S_{D}}(\partial D'), \partial D_{H'}')\leq 4$. Hence 
\begin{eqnarray}
\label{eqn4}
d_{\mathcal {AC}(S_{D})}(\pi_{S_{D}}(\partial A), \partial D_{H'})\leq 6.
\end{eqnarray}On the other hand,  for $A\subset 
\Psi_{N}^{n'}(V)$ is a spanning annulus, there is an essential disk $D''$ disjoint from $A$. 
Therefore,  $d_{\mathcal {AC}(S_{D})}(\pi_{S_{D}}(\partial A), \pi_{S_{D}}(\partial D''))\leq 2$. Since $S_{D}$ is also a compressible hole for $\mathcal {D}(\Psi_{N}^{n'}(V))$, by Lemma \ref{lemme2.1} again,  there is an essential disk $D'''$ with $\partial D'''\subset S_{D}$ so that 
\begin{eqnarray}
\label{eqn5}
    d_{\mathcal {AC}(S_{D})}(\pi_{S_{D}}(\partial A), \partial D''')\leq 6.
\end{eqnarray} 
By inequalities (\ref{eqn4}) and (\ref{eqn5}), $d_{\mathcal {AC}(S_{D})}(\partial D''', \partial D_{H'})\leq 12,$  against inequality (1). 

 The argument for $\overline{\Psi_{N}^{n'}(V)-T^{2}\times I}\cup_{\Sigma} (H'\cup T^{2}\times I )$ is similar. So we omit it.

\section{$C_{n}$ is infinite.}
For any positive integer $n$, let $K$ be a high distance knot with $t(K)=n$, equivalently,  there is a distance at least $2t(K)+5$ Heegaard splitting  $V\cup_{S} W$ for $E(K)$, see \cite{JMM} for detail. On the other hand, $E(K)$ admits a genus $g_{1}(K)+1$ Heegaard splitting $V'\cup_{S'} W'$ obtained from its $(g_{1}(K),1)$-bridge position. The inequality $(\ast)$ implies that $g(S')=g_{1}(K)+1\leq g(S)+1$. 
\begin{claim}
\label{claim2.2}
    $V'\cup_{S'} W'$ is either stabilized or boundary stabilized.
\end{claim}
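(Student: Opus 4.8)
We have a high-distance knot $K$ with $t(K)=n$, so the complement $E(K)$ admits a Heegaard splitting $V\cup_S W$ of distance at least $2t(K)+5=2n+5$, realizing the minimal Heegaard genus $g(S)=g_H(K)=t(K)+1=n+1$. On the other hand, from the $(g_1(K),1)$-bridge position of $K$ we obtain a second Heegaard splitting $V'\cup_{S'} W'$ of $E(K)$ with $g(S')=g_1(K)+1$. The inequality chain $(\ast)$ gives $g_1(K)+1\le g_H(K)+1=g(S)+1$, so $g(S')\le g(S)+1$. The claim to prove is that this bridge splitting $V'\cup_{S'} W'$ must be either stabilized or boundary stabilized.

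**The plan.**
The plan is to play the high distance of $V\cup_S W$ against the near-minimal genus of $V'\cup_{S'}W'$ via the standard comparison machinery for Heegaard splittings of the same manifold. First I would invoke the comparison theorem (in the spirit of Scharlemann--Tomova, or the version used already in this paper for $E(K_n)$) which says that if two Heegaard splittings of a manifold are both unstabilized and un-boundary-stabilized, then the distance of one is bounded above by twice the genus of the other: concretely $d(V\cup_S W)\le 2g(S')$. I would assume for contradiction that $V'\cup_{S'}W'$ is \emph{neither} stabilized \emph{nor} boundary stabilized. Then the comparison theorem applies and yields
\[
2n+5\le d(V\cup_S W)\le 2g(S').
\]
Since $g(S')=g_1(K)+1$ and $g_1(K)\le g_H(K)=n+1$ by $(\ast)$, we would have $g(S')\le n+2$, so $2g(S')\le 2n+4$. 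This contradicts $2n+5\le 2g(S')$, and the contradiction forces $V'\cup_{S'}W'$ to be stabilized or boundary stabilized.

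**Where the work concentrates.**
The arithmetic above is the easy part; the delicate point is getting the genus bound on $g(S')$ sharp enough that $2g(S')<2n+5$, i.e.\ that $g(S')\le n+2$. This requires knowing $g_1(K)\le n+1$, which is exactly the relevant part of $(\ast)$, so this step is in hand. The real obstacle is ensuring the hypotheses of the comparison theorem are genuinely met: the theorem requires that the high-distance splitting $V\cup_S W$ be compared against a splitting that is irreducible and $\partial$-irreducible, and the whole point of the contrapositive is that \emph{denying} the conclusion of the claim is precisely what supplies these hypotheses for $S'$. I must also make sure the distance hypothesis $2t(K)+5$ is exactly the threshold the comparison theorem demands against a genus-$(n+2)$ surface; the constant $5$ is chosen so that $2n+5$ strictly exceeds $2g(S')\le 2(n+2)=2n+4$, leaving the needed one-unit gap. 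I expect the main subtlety to be bookkeeping the boundary: since $E(K)$ has torus boundary, ``stabilized or boundary stabilized'' is the correct dichotomy (one cannot simply say ``stabilized''), and the comparison theorem must be applied in its version for manifolds with boundary, where boundary stabilization appears as a genuine alternative alongside ordinary stabilization and the isotopy case excluded by high distance.
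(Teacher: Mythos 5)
Your proposal follows the same overall strategy as the paper (play Scharlemann--Tomova against the high distance of $V\cup_S W$), and your arithmetic in the non-isotopy case matches the paper's: $2g(S')\leq 2g(S)+2=2n+4<2n+5\leq d(S)$. However, there is a genuine gap: the comparison theorem as you state it --- ``if both splittings are unstabilized and un-boundary-stabilized then $d(V\cup_S W)\leq 2g(S')$'' --- is not the correct statement. Scharlemann--Tomova's theorem is a dichotomy: either $d(S)\leq 2g(S')$, \emph{or} $S'$ is isotopic to $S$ or to a stabilization or boundary stabilization of $S$. Assuming for contradiction that $S'$ is neither stabilized nor boundary stabilized only reduces the second alternative to ``$S'$ is isotopic to $S$''; it does not remove it. And this case cannot be dismissed by genus counting: $(\ast)$ permits $g_1(K)=n$, in which case $g(S')=n+1=g(S)$ and the two surfaces could a priori be isotopic.

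Your closing remark that the isotopy case is ``excluded by high distance'' has the logic backwards. If $S'$ were isotopic to $S$, then $S'$ would simply \emph{inherit} the high distance --- by itself this is no contradiction. The contradiction must come from the opposite direction, using the specific construction of $S'$: since $S'$ arises from a $(g_1(K),1)$-bridge position, the compression body $V'$ contains a spanning annulus $A$ (traced out by the bridge arc) and $W'$ contains an essential disk $E$ with $|\partial A\cap\partial E|=1$. This configuration forces $d(S')\leq 2$. Then isotopy of $S'$ to $S$ would give $d(S)\leq 2<2t(K)+5$, the desired contradiction. This low-distance argument from the bridge-position structure is the substantive piece of the paper's proof that your proposal is missing; without it, the case $g_1(K)=n$ with $S'$ isotopic to $S$ is left open and the claim is not proved.
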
     
\begin{proof}
    Suppose not. Then $V'\cup_{S'} W'$ is neither stabilized nor boundary stabilized.  According to Scharlemann and Tomova' main result \cite{ST},  either\\
    (1) $V'\cup_{S'} W'$ is isotopic to $V\cup_{S} W$.  By the construction of $S'$, there is a spanning annulus $A\subset V'$ and an essential disk $E\subset W'$ so that $\partial A\cap \partial E$ is a point in $S'$. It means that Heegaard distance $d(S')\leq 2$. Since $S'$ is isotopic to $S$, $d(S)\leq 2$, which contradicts  the assumption that $d(S)\geq 2t(K)+5>5$. 
    Or, 
    (2) The Heegaard distance $d(S)\leq 2g(S')\leq 2g(S)+2=2t(K)+4$. It contradicts to the choice of $K$.  So this is impossible.
\end{proof}
By  Claim \ref{claim2.2}, there is a genus $g(S')-1$ Heegaard splitting for $E(K)$ obtained from a destabilization or boundary destabilization of $V'\cup_{S'} W'$. Hence $g_{H}(K)\leq g(S')-1=g_{1}(K)$. The inequality 
$(\ast)$ implies that $g_{1}(K)=g_{H}(K)$.  Hence $t(K)=n<g_{1}(K)$.


\newpage

\bibliographystyle{amsalpha}

\end{document}